\newcommand{\Hs}{Hilbert system}
\newcommand{\Hm}{Hilbert matrix}
\newcommand{\Hhyphenm}{Hilbert-matrix}
\newcommand{\Hms}{Hilbert matrices}
\newcommand{\te}{tensor equivalent}
\newcommand{\tenoun}{tensor equivalence}
\newtheorem{thm}{Theorem}[section]
\newtheorem*{thm*}{Theorem}
\newtheorem{cor}[thm]{Corollary}
\newtheorem{prop}[thm]{Proposition}
\newtheorem{obs}[thm]{Observation}
\theoremstyle{definition}
\newtheorem{defn}[thm]{Definition}
\newtheorem{q}[thm]{Question}
\newtheorem{qs}[thm]{Questions}
\newtheorem{ex}[thm]{Example}
\newtheorem*{notn*}{Notation}
\newtheorem*{hyp*}{Hypothesis}
\newtheorem{rem}[thm]{Remark}
\numberwithin{equation}{section}
\newcommand{\secref}[1]{Section~\textup{\ref{#1}}}
\newcommand{\thmref}[1]{Theorem~\textup{\ref{#1}}}
\newcommand{\corref}[1]{Corollary~\textup{\ref{#1}}}
\newcommand{\obsref}[1]{Observation~\textup{\ref{#1}}}
\newcommand{\defnref}[1]{Definition~\textup{\ref{#1}}}
\newcommand{\qref}[1]{Question~\textup{\ref{#1}}}
\newcommand{\exref}[1]{Example~\textup{\ref{#1}}}
\newcommand{\midtext}[1]{\quad\text{#1}\quad}
\newcommand{\righttext}[1]{\quad\text{#1 }}
\renewcommand{\and}{\midtext{and}}
\renewcommand{\for}{\righttext{for}}
\newcommand{\N}{\mathbb N}
\newcommand{\C}{\mathbb C}
\newcommand{\T}{\mathbb T}
\newcommand{\KK}{\mathcal K}
\newcommand{\HH}{\mathcal H}
\newcommand{\LL}{\mathcal L}
\newcommand{\FF}{\mathcal F}
\newcommand{\EE}{\mathcal E}
\newcommand{\OO}{\mathcal O}
\newcommand{\Chi}{\raisebox{2pt}{\ensuremath{\chi}}}
\renewcommand{\epsilon}{\varepsilon}
\DeclareMathOperator{\aut}{Aut}
\DeclareMathOperator{\ad}{Ad}
\DeclareMathOperator{\supp}{supp}
\DeclareMathOperator{\pic}{Pic}
\DeclareMathOperator*{\spn}{span}
\DeclareMathOperator*{\clspn}{\overline{\spn}}
\newcommand{\id}{\text{\textup{id}}}
\newcommand{\<}{\langle}
\renewcommand{\>}{\rangle}
\newcommand{\inv}{^{-1}}
\newcommand{\iso}{\overset{\cong}{\longrightarrow}}
\renewcommand{\bar}{\overline}
\newcommand{\wilde}{\widetilde}
\newcommand{\csp}{cor\-re\-spond\-ence} 
\newcommand{\dml}{dim\-en\-sion\-al}
\begin{document}

\title[Obstructions]{Obstructions to a general characterization of graph correspondences}
\author[Kaliszewski, Patani, and Quigg]
{S.~Kaliszewski, Nura Patani, and John Quigg}
\address [S.~Kaliszewski] {School of Mathematical and Statistical Sciences, Arizona State University, Tempe, Arizona 85287} \email{kaliszewski@asu.edu}\
\address[Nura Patani]{School of Mathematical and Statistical Sciences, Arizona State University, Tempe, Arizona 85287} \email{nura.patani@asu.edu}
\address[John Quigg]{School of Mathematical and Statistical Sciences, Arizona State University, Tempe, Arizona 85287} \email{quigg@asu.edu}

\date{January 11, 2012}

\begin{abstract}
For a countable discrete space $V$,
every nondegenerate separable $C^*$-\csp\ over $c_0(V)$ 
is isomorphic to one coming from a directed graph with vertex set $V$.  
In this paper we demonstrate why the analogous characterizations 
fail to hold for higher-rank graphs 
(where one considers product systems of $C^*$-\csp s)
and for topological graphs 
(where $V$ is locally compact Hausdorff), 
and we discuss the obstructions that arise.
\end{abstract}

\subjclass[2010]{Primary 46L08}
\keywords {Hilbert modules, $C^*$-correspondences, $k$-graphs, topological graphs}

\maketitle

\section{Introduction}

Given a directed graph $E$ with countable vertex set $V$,
one can construct the \emph{graph correspondence} $X_E$,
which is a $C^*$-\csp\ over $c_0(V)$
that encodes the structure of $E$ in a particularly nice way: 
indeed, we have recently shown
in \cite{kpq1} that the assignment $E\mapsto X_E$ gives rise to an
essentially surjective functor from a category of directed graphs
with vertex set~$V$ to a category of $c_0(V)$-\csp s.
In particular, every separable, nondegenerate $c_0(V)$-\csp\ 
is isomorphic to a graph correspondence.

In this paper we investigate this issue 
--- characterizing the $C^*$-\csp s that come from graphs ---
in two more-general contexts: that of \emph{$k$-graphs}, 
which are higher-\dml\  analogs of directed graphs; 
and that of \emph{topological graphs}, which are continuous analogs. 
It turns out that in each case the most na\"ive characterization fails:
there are $C^*$-\csp s that 
are not isomorphic to any graph correspondence.
However, we can in special cases identify obstructions
that may lead to a general characterization.

We refer to \cite{raeburngraph} for the basic definitions and notations
regarding directed graphs, and their associated $C^*$-correspondences, $k$-graphs, and topological $k$-graphs.

In \secref{sec-one}, we describe product systems associated to $k$-graphs.  In particular, we discuss how a $k$-graph may be viewed as a product system over $\N^k$ of directed graphs and define the associated \emph{$k$-graph correspondence}, which is a product system of $C^*$-correspondences.  We discuss how each of these product systems may be realized in terms of its \emph{skeleton}  and show how the factorization of paths in the $k$-graph is encoded in the skeleton of the $k$-graph correspondence.

In \secref{sec-Hsystem}, we define \emph{Hilbert matrices}
and \emph{Hilbert systems} and show that Hilbert systems are essentially the same as product systems of $C^*$-correspondences.  We restrict our attention to the case $k=2$ in \secref{sec-2gr} and establish a characterization of the product systems over $\N^2$ of $c_0(V)$-correspondences arising from 2-graphs.  Further, we specialize to the setting $V=\{v\}$ and identify an isomorphism invariant for product systems over $\N^2$ of one-dimensional Hilbert spaces that allows us to identify precisely when such a product system is isomorphic to a 2-graph correspondence.

In \secref{sec-topgr}, we turn our attention to topological graphs with vertex space $V$,
and initially focus on Hilbert $A$-modules, where $A=C_0(V)$.  To each Hilbert $A$-module $X$ is associated a Hilbert bundle $p:\HH\to V$.  We show that the existence of a continuous choice of orthonormal bases for this Hilbert bundle is equivalent to $X$ being isomorphic to the Hilbert $A$-module constructed from a topological graph using the source map to implement the right $A$-module structure.  To incorporate the left $A$-module structure, we consider the representations $\pi_v$ coming from the left module action of $A$ on each fibre of the associated Hilbert bundle.  For an $A$-correspondence to arise from a topological graph, the continuous choice of orthonormal bases must diagonalize the representation $\pi_v$ for each $v\in V$.

To more closely study this, we restrict our attention to the setting where the associated Hilbert bundle has 1-dimensional fibres.  For such an $A$-correspondence, being isomorphic to a topological graph correspondence is equivalent to the associated Hilbert bundle being trivial.  Further specializing to the setting of $A$-imprimitivity bimodules, we identify the \emph{Picard invariant} of an $A$-imprimitivity bimodule~$X$ as the obstruction to $X$ being isomorphic to a topological graph correspondence.

\section{$k$-graphs and product systems over $\N^k$}
\label{sec-one}

Throughout this section, we consider a fixed countable set $V$.
After a brief discussion of product systems,
we show how each $k$-graph with vertex set $V$
gives rise to a product system over $\N^k$
of $c_0(V)$-\csp s, which we call a \emph{$k$-graph correspondence}.
We show that in general, any product system of $c_0(V)$-\csp s
is determined, up to isomorphism, by a set of basic
data we call its \emph{skeleton},
and we describe the skeleton of a $k$-graph correspondence
in terms of the structure of the $k$-graph.  

If $E$ and $F$ are directed graphs with vertex set $V$,
the \emph{fibred product} $E*F$ 
is the graph with vertex set $V$,
edge set
\[
 (E*F)^1 = \{ (e,f) \in E^1\times F^1 \mid s_E(e) = r_F(f) \},
\]
and range and source maps $r$ and $s$ given by
\[
 r(e,f) = r_E(e)
\quad\text{and}\quad
s(e,f) = s_F(f).
\]
If $\phi\colon E\to E'$ and $\psi\colon F\to F'$ are 
graph homomorphisms that agree on~$V$,
then the fibred product homomorphism $\phi*\psi\colon E*F\to E'*F'$
defined on edges by $\phi*\psi(e,f)=(\phi(e),\psi(f))$
and on vertices by $v\mapsto \phi(v)=\psi(v)$
is easily seen to be a graph homomorphism.
In particular, if $E'$ and $F'$ are directed graphs with vertex set $V$ and if $\phi,\psi$ are \emph{vertex-fixing isomorphisms} (i.e., $\phi(v)=\psi(v)=v$), then $\phi*\psi$ is a graph isomorphism. 
(This construction makes the set 
of directed graphs having vertex set $V$
into a \emph{tensor groupoid} in the language of \cite{fowlersims_artin}.)

If $S$ is a countable semigroup with identity $e$,
then a \emph{product system over $S$ of graphs on $V$}
is a collection $E = \{ E_s \mid s\in S \}$ of 
directed graphs $E_s$ with vertex set $V$,
together with a collection $\alpha = \{ \alpha_{s,t} \mid s,t\in S \}$
of vertex-fixing graph isomorphisms
$\alpha_{s,t}\colon E_s*E_t \to E_{st}$
such that
$E_e = (V,V,\id_V,\id_V)$;
$\alpha_{e,s}\colon E_e*E_s \to E_s$
and $\alpha_{s,e}\colon E_s*E_e \to E_s$
are the natural maps given on edges by
$(r(e),e) \mapsto e$ and $(e,s(e)) \mapsto e$ for each $s\in S$;
and the \emph{associativity condition}
\begin{equation}\label{alpha}
 \alpha_{rs,t}\circ (\alpha_{r,s}*\id_t) = \alpha_{r,st}\circ(\id_r*\alpha_{s,t})
\end{equation}
holds {for all $r,s,t\in S$,}
where $\id_t$ is the identity map on $E_t$.
(This definition is a special case of \cite[Definition~1.1]{fowlersims_artin}.) 

Now let $A$ be a $C^*$-algebra.
A \emph{product system over $S$ of $A$-\csp s}
is a collection $X = \{ X_s \mid s\in S \}$
of $A$-\csp s,
together with a collection $\beta = \{ \beta_{s,t} \mid s,t\in S \}$
of $A$-\csp\ isomorphisms 
$\beta_{s,t}\colon X_s\otimes_A X_t\to X_{st}$
such that
$X_e = A$ (viewed in the natural way as an $A$-\csp);
$\beta_{e,s}\colon X_e\otimes_A X_s\to X_s$
and $\beta_{s,e}\colon X_s\otimes_A X_e\to X_s$
are the natural maps determined by
$a\otimes\xi\mapsto a\cdot\xi$
and $\xi\otimes a\mapsto \xi\cdot a$
for each $s\in S$;
and the associativity condition
$\beta_{rs,t}\circ(\beta_{r,s}\otimes\id_t) = \beta_{r,st}\circ(\id_r\otimes\beta_{s,t})$
holds for all $r,s,t\in S$.
(Again, see \cite[Definition~1.1]{fowlersims_artin}.)

As observed in \cite{fowlersims_artin}, 
product systems over $\N^k$ of graphs
are essentially the same as $k$-graphs. 
In more detail,
suppose $(\Lambda, d)$ is a \emph{$k$-graph with vertex set $V$}:
so $\Lambda$ is a countable category,
$d\colon \Lambda\to \N^k$ is the {degree functor},
and we have identified the object set $d\inv(0)$
with~$V$.
For each $m\in \N^k$, 
the set $d\inv(m)$ is the edge set of a directed graph $E_m$ with vertex set $V$ and range and source maps inherited from $\Lambda$.  
The collection $E = \{ E_m \mid m\in \N^k \}$
together with the vertex-fixing
isomorphisms $\alpha_{m,n}\colon E_m*E_n\to E_{m+n}$ given on edges by
\[
 \alpha_{m,n}(\mu,\nu) = \mu\nu
\]
is then a product system over $\N^k$ of graphs on $V$;
moreover, every such product system arises in this way
from a $k$-graph with vertex set $V$.

Retaining the above notation, for each $m\in\N^k$,
let $X_m$ be the $c_0(V)$-\csp\ associated to the graph $E_m$;
recall (from \cite{raeburngraph}, for example)
that by definition
\[
 X_m = \bigl\{ \xi\colon E_m^1\to \C \mid 
\text{the map $v\mapsto \sum_{s(e)=v} |\xi(e)|^2$ is in $c_0(V)$} \bigr\},
\]
with module actions and $c_0(V)$-valued inner product given by
\[
 (f\cdot\xi\cdot g)(e) = f(r(e))\xi(e)g(s(e))
\quad\text{and}\quad
\langle \xi,\eta\rangle(v) = \sum_{s(e)=v} \overline{\xi(e)}\eta(e).
\]
Note that $X_m$ is densely spanned by the set $\{ \Chi_e \mid e\in E_m^1 \}$,
where $\Chi_e$ denotes the characteristic function of $\{ e \}$.
Further, if $p_v\in c_0(V)$ denotes the characteristic function of a vertex $v\in V$, 
then for any $e\in E_m^1$ and $f\in E_n^1$ we have
\[
 \Chi_e\otimes \Chi_f = \Chi_e \cdot p_{s(e)} \otimes p_{r(f)}\cdot\Chi_f
= \Chi_e\cdot p_{s(e)}p_{r(f)}\otimes \Chi_f = 0
\]
in $X_m\otimes_{c_0(V)} X_n$ unless $s(e)=r(f)$; 
thus the balanced tensor product is densely spanned by the set
\[
 \{ \Chi_e \otimes \Chi_f \mid (e,f)\in (E_m*E_n)^1 \}.
\]
For each $m,n\in \N^k$ let $\beta_{m,n}\colon X_m\otimes_{c_0(V)}X_n\to X_{m+n}$
be the isomorphism determined by
\begin{equation}\label{beta}
 \beta_{m,n}( \Chi_e\otimes \Chi_f ) = \Chi_{\alpha_{m,n}(e,f)}
= \Chi_{ef}
\quad\text{ for $(e,f)\in (E_m*E_n)^1$.}
\end{equation}
Then $X = \{ X_m \mid m\in \N^k \}$ with $\beta = \{ \beta_{m,n}\mid m,n\in \N^k\}$
is a product system over $\N^k$ of $c_0(V)$-\csp s
(see \cite[Examples~1.5]{fowlersims_artin}).
We call $(X,\beta)$ the \emph{$k$-graph correspondence} 
associated to  $(\Lambda,d)$.

Suppose $(X,\beta)$ is any product system over $\N^k$ of $A$-\csp s,
and let $\{ e_i \mid 1\leq i\leq k \}$ denote the standard basis for $\N^k$.
Then setting 
\[
Y_i = X_{e_i}
\quad\text{and}\quad
T_{i,j} = \beta_{e_j,e_i}\inv\circ\beta_{e_i,e_j}\colon 
Y_i\otimes_A Y_j \to Y_j\otimes_A Y_i
\]
gives a collection $Y = \{ Y_i \mid 1\leq i\leq k \}$ of $A$-\csp s 
and a collection $T = \{ T_{i,j} \mid 1\leq i<j\leq k \}$ of $A$-\csp\ isomorphisms
such that the \emph{hexagonal equation}
\begin{equation}\label{hex} 
\begin{split}
(T_{j,\ell}\otimes\id_i)(\id_j \otimes T_{i,\ell})&(T_{i,j}\otimes\id_\ell) \\
&= (\id_\ell\otimes T_{i,j})(T_{i,\ell}\otimes\id_j)(\id_i\otimes T_{j,\ell})
\end{split}
\end{equation}
holds for all $1\leq i<j<\ell\leq k$,
where $\id_i$ is the identity map on $Y_i$. 
(Note that both sides of~\eqref{hex} are isomorphisms 
$Y_i\otimes_A Y_j\otimes_A Y_\ell \to Y_\ell\otimes_A Y_j\otimes_A Y_i$.)
We call the pair $(Y,T)$ the \emph{skeleton} of $(X,\beta)$.

It follows from  \cite[Proposition~2.11]{fowlersims_artin} 
(see also Remark~\ref{ijk} below) that 
a product system is uniquely determined, up to isomorphism, by its 
skeleton.  More precisely, if $(Z,\gamma)$ is another product system
over $\N^k$ of $A$-\csp s,
with skeleton $(W,R)$, then $(X,\beta)\cong (Z,\gamma)$ 
if and only if $(Y,T)\cong (W,R)$ in the sense 
that there exist isomorphisms $\theta_i\colon Y_i\to W_i$
such that the following diagram commutes for each $1\leq i<j\leq k$:
 \begin{center}
 \begin{tikzpicture}
 \node (a) at (-2,2) {$Y_i\otimes Y_j$};
 \node (b) at (2,2) {$Y_j\otimes Y_i$};
 \node (c) at (-2,0) {$W_i\otimes W_j$};
 \node (d) at (2,0) {$W_j\otimes W_i$.};
 \draw[->] (a) -- node[above] {$T_{i,j}$} (b);
 \draw[->] (c) -- node[above] {$R_{i,j}$} (d);
 \draw[->] (a) -- node[left] {$\theta_i\otimes\theta_j$} (c);
 \draw[->] (b) -- node[right] {$\theta_j\otimes\theta_i$} (d);
 \end{tikzpicture}
\end{center}

\begin{rem}\label{ijk}
Observe that for $k\leq 2$, there are no hexagonal equations~\eqref{hex}, 
and for $k=1$ there are no $T_{i,j}$'s.
Also, defining $T_{j,i}=T_{i,j}\inv$ for $i<j$, we have a collection $\{T_{i,j}\mid 1\le i\ne j\le k\}$ of correspondence isomorphisms that satisfy the hexagonal equations for all distinct $i,j,l$,
as in \cite{fowlersims_artin}.
It is convenient to note that we only need the $T_{i,j}$'s for $i<j$.
\end{rem}

The following proposition shows how the skeleton encodes the ``commuting squares''
--- that is, the factorizations of paths of degree $e_i+e_j$ ---
of a $k$-graph.

\begin{prop}\label{skeleton}
Suppose $(\Lambda, d)$ is a $k$-graph with vertex set $V$,
let $(E,\alpha)$ be the associated product system over $\N^k$ of graphs,
let $(X,\beta)$ be the associated $k$-graph correspondence, 
and let $(Y,T)$ be the skeleton of $(X,\beta)$.
Then for each $1\leq i<j\leq k$
and each $e\in E_{e_i}^1$ and $f\in E_{e_j}^1$ we have
\[
 T_{i,j}( \Chi_e\otimes \Chi_f ) = \Chi_{\tilde f}\otimes \Chi_{\tilde e},
\]
where $\tilde f$ and $\tilde e$ are the unique edges
in $E_{e_j}$ and $E_{e_i}$, respectively,
such that $\tilde f \tilde e = ef$ in~$E_{e_i+e_j}$. 
\end{prop}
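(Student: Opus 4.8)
The plan is to prove the identity by a direct computation from the definitions of $T_{i,j}$ and $\beta$, the single substantive ingredient being the unique-factorization axiom of the $k$-graph $(\Lambda,d)$. Since the discussion preceding the statement records that $\Chi_e\otimes\Chi_f = 0$ in $X_{e_i}\otimes_{c_0(V)}X_{e_j}$ unless $s(e)=r(f)$, and since the relation $\tilde f\tilde e = ef$ presupposes that $ef$ is a genuine path, I would first restrict attention to composable pairs $(e,f)\in(E_{e_i}*E_{e_j})^1$. For such a pair, unwinding $T_{i,j}=\beta_{e_j,e_i}\inv\circ\beta_{e_i,e_j}$ and applying \eqref{beta} with $m=e_i$, $n=e_j$ gives $\beta_{e_i,e_j}(\Chi_e\otimes\Chi_f)=\Chi_{ef}$, an element of $X_{e_i+e_j}$.

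The key step is then to compute $\beta_{e_j,e_i}\inv(\Chi_{ef})$, and this is exactly where the $k$-graph structure enters. The path $ef$ has degree $e_i+e_j=e_j+e_i$, so the factorization property of $\Lambda$ provides a unique pair $\tilde f,\tilde e$ with $d(\tilde f)=e_j$ and $d(\tilde e)=e_i$ satisfying $\tilde f\tilde e=ef$; equivalently, $\tilde f\in E_{e_j}^1$ and $\tilde e\in E_{e_i}^1$ with $s(\tilde f)=r(\tilde e)$, so that $(\tilde f,\tilde e)\in(E_{e_j}*E_{e_i})^1$. Applying \eqref{beta} to this composable pair, now with $m=e_j$, $n=e_i$, yields $\beta_{e_j,e_i}(\Chi_{\tilde f}\otimes\Chi_{\tilde e})=\Chi_{\tilde f\tilde e}=\Chi_{ef}$. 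Because $\beta_{e_j,e_i}$ is an isomorphism this forces $\beta_{e_j,e_i}\inv(\Chi_{ef})=\Chi_{\tilde f}\otimes\Chi_{\tilde e}$, and composing with the first step gives $T_{i,j}(\Chi_e\otimes\Chi_f)=\Chi_{\tilde f}\otimes\Chi_{\tilde e}$, as desired.

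I do not expect a genuine obstacle here: once the definitions are unwound, the only nontrivial fact used is the existence and uniqueness of the reversed factorization, which is precisely the defining axiom of a $k$-graph. The one point worth a sentence of care is that \eqref{beta} specifies $\beta_{m,n}$ only on the simple tensors $\Chi_e\otimes\Chi_f$ that densely span the balanced tensor product; to invert it on $\Chi_{ef}$ I am relying on the fact (already noted before the statement) that these tensors span, together with the observation that $\Chi_{\tilde f}\otimes\Chi_{\tilde e}$ is a nonzero simple tensor precisely because $(\tilde f,\tilde e)$ is composable. With these remarks in place the verification is purely computational.
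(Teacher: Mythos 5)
Your argument is correct and is essentially identical to the paper's proof: apply \eqref{beta} to get $\beta_{e_i,e_j}(\Chi_e\otimes\Chi_f)=\Chi_{ef}=\beta_{e_j,e_i}(\Chi_{\tilde f}\otimes\Chi_{\tilde e})$ and invoke the definition $T_{i,j}=\beta_{e_j,e_i}\inv\circ\beta_{e_i,e_j}$. Your added remarks on composability and the unique factorization property are sensible but do not change the route.
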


\begin{proof}
By~\eqref{beta},
we have
$\beta_{e_i,e_j}(\Chi_e\otimes \Chi_f) = \Chi_{ef}$ in $X_{e_i+e_j}$,
and similarly
$\beta_{e_j,e_i}(\Chi_{\tilde f}\otimes\Chi_{\tilde e}) 
= \Chi_{\tilde f\tilde e}
= \Chi_{ef}$.
Since $T_{i,j} = \beta_{e_j,e_i}\inv\circ\beta_{e_i,e_j}$ by definition,
the result follows.
\end{proof}

Since we are interested in determining which product systems come from $k$-graphs, and since the correspondences in the product system coming from a $k$-graph are automatically nondegenerate, we will restrict attention to product systems of nondegenerate correspondences.

\section{Product systems over $\N^2$}

To analyze product systems of correspondences over $c_0(V)$ in more detail, we need to decompose using the minimal projections $p_v:=\Chi_{\{v\}}$ for $v\in V$. For arbitrary $k$ this would involve an enormous amount of book-keeping, and would be further complicated by the hexagonal equation~\eqref{hex}. 
Therefore we now specialize to the case $k=2$. Some of what we will do (particularly concerning product systems of 1-dimensional correspondences) can be carried over to the general case.

So, the skeleton $(Y,T)$ of our product system $(X,\beta)$ over $\N^2$ has $Y=(Y_1,Y_2)$; since we will only be working with two correspondences, we will simplify the notation by letting
\[
(Y_1,Y_2)=(Y,Z),
\]
Also, we only have one isomorphism $T_{1,2}$, so we will just write this as $T$.
Thus, $T:Y\otimes_A Z\iso Z\otimes_A Y$ is an isomorphism of $A$-correspondences, and
we denote the skeleton of the product system $(X,\beta)$ by
$(Y,Z,T)$.

We use similar notational conventions for isomorphisms between product systems over $\N^2$: if $(X',\beta')$ is another product system, with skeleton $(Y',Z',T')$, an isomorphism from the first skeleton to the second is an ordered pair $(\theta,\tau)$ of $A$-correspondence isomorphisms
\[
\theta:Y\iso Y'\midtext{and}\tau:Z\iso Z'
\]
such that the diagram
\[
\xymatrix{
Y\otimes_A Z \ar[r]^-T \ar[d]_{\theta\otimes\tau}
&Z\otimes_A Y \ar[d]^{\tau\otimes\theta}
\\
Y'\otimes_A Z' \ar[r]_-{T'}
&Z'\otimes_A Y'
}
\]
commutes.

\section{\Hs s}
\label{sec-Hsystem}

We continue to consider a fixed countable set~$V$.

\begin{defn}
A \emph{\Hm\ over $V$} is a family $H=\{H_{uv}\}_{u,v\in V}$ of Hilbert spaces.
\end{defn}

Unless otherwise specified, all our \Hms\ will be over $V$.

\begin{defn}
If $H$ and $K$ are \Hms, an \emph{isomorphism} of $H$ onto $K$ is a family $S=\{S_{uv}\}_{u,v\in V}$, where each $S_{uv}$ is a unitary operator from $H_{uv}$ onto $K_{uv}$.
\end{defn}

\begin{defn}
The \emph{product} of two \Hms\ $H$ and $K$ is the \Hm\ $H*K$, where
\[
(H*K)_{uv}=\bigoplus_{x\in V}(H_{ux}\otimes K_{xv})
\]
\end{defn}

\begin{defn}
A \emph{\Hs\ over $V$} is a triple $(H,K,S)$, where $H$ and $K$ is are \Hms\ and $S:H*K\to K*H$ is an isomorphism.
\end{defn}

Since our \Hms\ will be over $V$ by default, so will our \Hs s.

\begin{rem}
The existence of an isomorphism $H*K\cong K*H$ for two \Hms\ $H$ and $K$ 
is equivalent to the following relation among the dimensions of the component Hilbert spaces:
\[
\sum_{x\in V}(\dim H_{ux})(\dim K_{xv})=\sum_{x\in V}(\dim K_{ux})(\dim H_{xv})\righttext{for all}u,v\in V.
\]
\end{rem}

\begin{defn}
An \emph{isomorphism} $(\sigma,\tau):(H,K,S)\to (H',K',S')$ between \Hs s comprises isomorphisms $\sigma:H\to H'$ and $\tau:K\to K'$ making the diagram
\[
\xymatrix{
H*K \ar[r]^-{S_{uv}} \ar[d]_{\sigma*\tau}
&K*h \ar[d]^{\tau*\sigma}
\\
H'*K' \ar[r]_-{S'_{uv}}
&K'*H'
}
\]
commute, where the fibered product $\sigma*\tau$ has component unitaries
\[
(\sigma*\tau)_{uv}=\bigoplus_{x\in V}(\sigma_{ux}\otimes\tau_{xv}),
\]
and similarly for $\tau*\sigma$.
\end{defn}

We will establish an equivalence between product systems and \Hs s. Unsurprisingly, we will work with the skeletons of the product systems. Recall that we put $A=c_0(V)$.
First we define the \Hs\ associated to a product system.

\begin{defn}\label{Y to H}
The \emph{\Hm\ associated to} an $A$-correspondence $Y$ is given by $H=\{H_{uv}\}_{u,v\in V}$ where
\[
H_{uv}=p_u\cdot Y\cdot p_v.
\]
Here $p_u=\Chi_{\{u\}}$ is the characteristic function of the singleton set $\{u\}$, regarded as a minimal projection in $A$, and where the 1-dimensional ideal $\C p_v$ is identified with the complex numbers $\C$.
\end{defn}

\begin{ex}
If $Y=X(E)$ is the graph correspondence associated to a directed graph $E$ with vertex set $V$, then
\[
H_{uv}=\ell^2(uE^1v).
\]
\end{ex}

\begin{obs}\label{T to S}
Let $X$ be a product system over $\N^2$ of $A$-correspondences, with skeleton $(Y,Z,T)$, let $H$ and $K$ be the \Hms\ associated to $Y$ and $Z$, respectively, as in \defnref{Y to H}.
It follows from the constructions that $H*K$ is the \Hm\ associated to the $A$-correspondence $Y\otimes_A Z$, and similarly for $K*H$ and $Z\otimes_A Y$.
Moreover, for each $u,v\in V$ the restriction $T|_{(H*K)_{uv}}$ gives a unitary map
\[
S_{uv}:(H*K)_{uv}\to (K*H)_{uv}.
\]
\end{obs}

\begin{ex}\label{graph system}
In \obsref{T to S}, if $X$ is the graph associated to a 2-graph, with associated product system $\{E_n\}_{n\in\N^2}$ of graphs, then
\[
(H*K)_{uv}=\clspn\{\Chi_e\otimes\Chi_f:(e,f)\in  u(E_{e_1}*E_{e_2})^1v\}.
\]
Moreover,
\[
S_{uv}(\Chi_e\otimes\Chi_f)=\Chi_{\wilde f}\otimes\Chi_{\wilde e},
\]
where $\tilde f$ and $\tilde e$ are the unique edges
in $E_{e_j}$ and $E_{e_i}$, respectively,
such that $\tilde f \tilde e = ef$ in~$E_{e_i+e_j}$.
\end{ex}

\begin{defn}\label{corres to Hs}
Let $(Y,Z,T)$ be the skeleton of a product system $X$ over $\N^2$ of $A$-correspondences. The \emph{\Hs\ associated to $X$} is $(H,K,S)$,  where $S=\{S_{uv}\}$ is the family of unitaries defined in \obsref{T to S}.
\end{defn}

We now turn to the task of defining the product system associated to a \Hs.

\begin{obs}\label{H to corres}
Let $H$ be a \Hm\ over $V$. Then for every $u,v\in V$ we can define an $A$-correspondence structure on $H_{uv}$ as follows: for $\xi,\eta\in H_{uv}$ and $f\in A$ we let
\begin{itemize}
\item $\xi\cdot f=\xi f(v)$;

\item $f\cdot \xi=f(u)\xi$;

\item $\<\xi,\eta\>_A=\<\xi,\eta\> p_v$,
\end{itemize}
where on the right-hand side of the last equation the unadorned inner product $\<\xi,\eta\>$ is the one given by the Hilbert space $H_{uv}$.\footnote{Note that in order to facilitate the connection with correspondences we adopt the physicists' convention that the Hilbert-space inner product is linear in the second variable!}
\end{obs}

\begin{defn}\label{H to Y}
Let $H$ be a \Hm\ over $V$, and let the Hilbert spaces $H_{uv}$ be regarded as $A$-correspondences as in \obsref{H to corres}.
Then the \emph{$A$-correspondence associated to $H$} is
\[
Y:=\bigoplus_{u,v\in V}H_{uv},
\]
regarded as a direct sum of $H$-correspondences.
\end{defn}

\begin{obs}\label{iso H to Y}
Let $H$ and $K$ be \Hms\ over $V$, and let $Y$ and $Z$ be the $A$-correspondences associated to $H$ and $K$, respectively, as in \defnref{H to Y}. Further let $S:H\to K$ be a \Hhyphenm\ isomorphism. Then each $S_{uv}:H_{uv}\to K_{uv}$ can be regarded as an $A$-correspondence isomorphism, and the direct sum
\[
T=\bigoplus_{u,v\in V}S_{uv}:Y\to Z
\]
is also an $A$-correspondence isomorphism.
\end{obs}

\begin{defn}\label{associated iso}
With the notation of \obsref{iso H to Y}, we call $T:Y\to Z$ the \emph{$A$-correspondence isomorphism associated to $S$}.
\end{defn}

\begin{defn}\label{Hs to corres}
Let $(H,K,S)$ be a \Hs, and let $Y$ and $Z$ be the $A$-correspondences associated to $H$ and $K$, respectively, as in \defnref{H to Y}.
Then the \emph{product system associated to $(H,K,S)$}
is the unique product system determined by the skeleton $(Y,Z,T)$, where $T:Y\otimes_A Z\to Z\otimes_A Y$ is the $A$-correspondence isomorphism associated to the \Hhyphenm\ isomorphism $S:H*K\to K*H$ as in \defnref{associated iso}.
\end{defn}

\begin{thm}\label{equivalence}
\begin{enumerate}
\item Let $X$ be a product system over $\N^2$ of $A$-correspondences, let $(H,K,S)$ be the associated \Hs, and let $X'$ be the product system associated to $(H,K,S)$.
Then $X\cong X'$.

\item Let $(H,K,S)$ be a \Hs, let $X$ be the associated product system, and let $(H',K',S')$ be the \Hs\ associated to $X$. Then $(H,K,S)\cong (H',K',S')$.
\end{enumerate}
\end{thm}

\begin{proof}
(i) If $(Y,Z,T)$ and $(Y',Z',T')$ are the corresponding skeletons of $X$ and $X'$, respectively, then $Y$ is the internal direct sum of the $A$-correspondences $H_{uv}$, while $Y'$ is their external direct sum, and similarly for $Z$, $Z'$, and the $K_{uv}$'s. The canonical isomorphism between internal and external direct sums is easily seen to give an isomorphism $X\cong X'$.

(ii) If $(Y,Z,T)$ is the skeleton of $X$, then $Y$ is the external direct sum of the $H_{uv}$'s while each $H_{uv}$ is a sub-correspondence of $Y'$, and similarly for $Z$, $K_{uv}$, and $K'_{uv}$. 
It is easy to check that if we take, for every $u,v\in V$, $\sigma_{uv}$ to be
the canonical inclusion of $H_{uv}$ into $Y$, and similarly for $\tau_{uv}$ including $K_{uv}$ into $Z$, then $(\sigma,\tau):(H,K,S)\to (H',K',S')$ is an isomorphism.
\end{proof}

\begin{rem}
\thmref{equivalence} says that the passage between product systems and \Hs s are inverse processes up to isomorphism. In fact, this could be promoted to an equivalence of categories, but since we have no applications in mind we will not make this precise.
\end{rem}

\section{The obstruction for $2$-graphs}
\label{sec-2gr}

For the case $k=2$, our characterization problem becomes the following:

\begin{q}
Which product systems are isomorphic to 2-graph correspondences?
\end{q}

The fact that product systems over $\N^2$ of $A$-correspondences are completely characterized by their associated \Hs s will lead us to one answer to the above question.

\begin{defn}
Let $H$ and $K$ be \Hms.
For $u,v\in V$ let $\EE_{u,v}$ and $\FF_{u,v}$ be orthonormal bases of the Hilbert spaces $H_{u,v}$ and $K_{u,v}$, respectively.
Put
\[
(\EE*\FF)_{u,v}=\{e\otimes f: e\in\EE_{u,x},f\in\FF_{x,v} \text{ for some } x\in V\}.
\]

\end{defn}

\begin{obs}
With the above notation, $(\EE*\FF)_{u,v}$ is an orthonormal basis of $(H*K)_{u,v}$.
\end{obs}

\begin{thm}\label{2-graph decomp}
Let $X$ be a product system over $\N^2$ of $A$-correspondences, and let $(H,K,S)$ be the associated \Hs.
Then $X\cong X(\Lambda)$ for some 2-graph $\Lambda$ with vertex set $V$ if and only if there are choices $\EE_{u,v}$ and $\FF_{u,v}$ of orthonormal bases of the $H_{u,v}$ and $K_{u,v}$, respectively, such that for all $u,v\in V$ we have
\[
S_{u,v}\bigl((\EE*\FF)_{u,v}\bigr)=(\FF*\EE)_{u,v}.
\]
\end{thm}

\begin{proof}
It is routine to verify that the stated property is an isomorphism invariant of \Hs s, and \exref{graph system} shows that it is satisfied when $X$ is a 2-graph correspondence.
\end{proof}

\begin{rem}
Relaxing the criterion in \thmref{2-graph decomp}
a little bit, we can make contact with the $k$-graph cohomology of \cite{KPSHomology}, although we only pursue it in the case $k=2$. Recall that on a 2-graph $\Lambda$ 
an element of $Z^2(\Lambda,\T)$ 
is just a function 
$\phi:\Lambda^{(1,1)}\to\T$, 
and $\phi,\phi'\in H^2(\Lambda,\T)$ are \emph{cohomologous} if there exists 
$\alpha\in Z^1(\Lambda,\T)$, i.e., 
a function $\alpha:\Lambda^{(1,0)}\cup \Lambda^{(0,1)}\to\T$ 
satisfying $\alpha(e)\alpha(f)=\alpha(f')\alpha(e')$ whenever $ef$ and $f'e'$ are the two edge-path factorizations of an element of $\Lambda^{(1,1)}$, such that 
\begin{equation}\label{cohom}
\alpha(e)\alpha(f)\phi(ef)=\alpha(f')\alpha(e')\phi'(f'e')
\end{equation}
in all such cases.

Consider the following relaxation of 
the criterion in \thmref{2-graph decomp}:
suppose that there are choices $\EE_{u,v}$ and $\FF_{u,v}$ of orthonormal bases of $H_{u,v}$ and $K_{u,v}$, respectively, such that 
for all $u,v\in V$ and $(e,f)\in( \EE*\FF)_{uv}$ 
there exists
$(f',e')\in (\FF*\EE)_{uv}$ such that
$S_{uv}(e\otimes f)$ is a scalar multiple of $f'\otimes e'$.

Equivalently, suppose that we have a bijection $\theta:\EE*\FF\to\FF*\EE$ and a map
$\phi_0:\EE*\FF\to\T$
such that if $(f',e')=\theta(e,f)$ then 
\begin{equation}\label{twisted S}
S_{uv}(e\otimes f)=\phi_0(e,f)(f'\otimes e').
\end{equation}
By \cite[Discussion in Section~6]{kp:kgraph} there is a unique 2-graph $\Lambda$,
with component directed graphs $\Lambda^{(1,0)}=\EE$ and $\Lambda^{(0,1)}=\FF$,
such that for all $\lambda\in\Lambda^{(1,1)}$, if $(e,f)$ is the unique element of $\EE*\FF$ such that $\lambda=ef$, then, with the above notation, $\lambda=f'e'$ is the other edge-path factorization.
It then follows that there is a unique $\phi\in Z^2(\Lambda,\T)$ such that for all $\lambda\in \Lambda^{(1,1)}$, if $\lambda=ef$ for $(e,f)\in\EE*\FF$ then
\[
\phi(\lambda)=\phi_0(e,f).
\]

We can recover the \Hs\ $(Y,Z,S)$, and hence the product system $X$, up to isomorphism from $\Lambda$ and $\phi$: for example we can take $Y$ to be the completion of the free vector space generated by $\EE$, completed with respect to the unique inner product in which $\EE$ is orthonormal, and similarly for $Z$ and $\FF$,
and then the cocycle $\phi$ clearly determines the unitaries $S_{uv}$.

The Cuntz-Pimsner algebra $\OO_X$ is isomorphic to the twisted 2-graph algebra $C^*_\phi(\Lambda)$ of \cite{KPSHomology}.
\cite[Proposition~7.6]{KPSHomology} shows that if $\phi$ is homologous to $\phi'$ then $C^*_\phi(\Lambda)\cong C^*_{\phi'}(\Lambda)$; in fact, a little more is true: the associated product systems are isomorphic. To see this, suppose $\phi$ and $\phi'$ are related as in \eqref{cohom}, 
and let $X$ and $X'$ be the associated product systems, with associated \Hs s $(H,K,S)$ and$(H,K,S')$, respectively ---
to be clear, we build the \Hms\ $H$ and $K$ such that, 
for every $u,v\in V$, 
$u(\Lambda^{(1,0)})v$ and $u(\Lambda^{(0,1)})v$ 
are orthonormal bases of $H_{uv}$ and $K_{uv}$, respectively; 
the only difference between $X$ and $X'$ is that the isomorphism 
$S:H*K\to K*H$ is built using $\phi$ and $S':H*K\to K*H$ is built using $\phi'$.
For each $u,v\in V$ let $\sigma_{uv}:H_{uv}\to H'_{uv}$ be the unique unitary such that
\[
\sigma_{uv}e=\alpha(e)e\for e\in \EE,
\]
and similarly for $\tau_{uv}:K_{uv}\to K'_{uv}$. It is routine to check that $(\sigma,\tau):(H,K,S)\to (H,K,S')$ is an isomorphism.
\end{rem}

\subsection*{Product systems of Hilbert spaces}

\thmref{2-graph decomp} does not seem to give a very practical test. The main difficulty in applying it would lie in analyzing the unitaries
\[
S_{uv}:\bigoplus_{x\in V}(H_{ux}\otimes K_{xv})\to \bigoplus_{x\in V}(K_{ux}\otimes H_{xv});
\]
$S_{uv}$ need not have any relation to the direct-sum decompositions.

In order to obtain a more practical test, we specialize to the case of one vertex, i.e., $V=\{v\}$. 
Then we 
can regard the \Hs\ $(H,K,S)$ as comprising a single unitary $S$ between Hilbert spaces $H$ and $K$,
and $X$ becomes a product system of Hilbert spaces.
Moreover, if $X'$ is another such product system of Hilbert spaces, with associated \Hs\ $(H',K',S')$, then $X\cong X'$ if and only if there are unitaries $\sigma:H\to H'$ and $\tau:K\to K'$ making the diagram
\[
\xymatrix{
H\otimes K \ar[r]^-S \ar[d]_{\sigma\otimes\tau}
&K\otimes H \ar[d]^{\tau\otimes\sigma}
\\
H'\otimes K' \ar[r]_{S'}
&K'\otimes H'
}
\]
commute.

We will get a more manageable 
invariant by composing with the flip: let $\Sigma:H\otimes K\to K\otimes H$ be the flip map, and define
\[
\omega=\Sigma S,
\]
which is a unitary operator on the Hilbert space $H\otimes K$.

\begin{defn}
With the above notation, we call $\omega$ the \emph{flipped unitary} associated to the product system $X$ over $\N^2$ of Hilbert spaces.
\end{defn}

\begin{defn}\label{equivalent}
Let $H$ and $K$ be Hilbert spaces. 
Say that two unitary operators $\omega$ and $\omega'$ on $H\otimes K$ are \emph{\te} if there exist unitaries $\sigma\in \LL(H)$ and $\tau\in \LL(K)$ such that
\[
\omega'=\ad(\sigma\otimes\tau)\omega.
\]
\end{defn}

The class of the
flipped unitary 
under \tenoun\
gives a complete isomorphism invariant for product systems over $\N^2$ of Hilbert spaces.
Using this invariant, the criterion of \thmref{2-graph decomp} becomes the following

\begin{cor}\label{2-graph omega}
A product system $X$ over $\N^2$ of Hilbert spaces is isomorphic to $X(\Lambda)$ for some 2-graph $\Lambda$ with a single vertex if and only if its flipped unitary is \te\ in the sense of \defnref{equivalent} to an elementary tensor $U_H\otimes U_K$ of permutation unitaries on $H$ and $K$.
\end{cor}

Here we say a unitary operator on a Hilbert space is a \emph{permutation unitary} if it can be represented as a permutation matrix with respect to some orthonormal basis.

The above results take the following particularly simple form for a single vertex and one-dimensional Hilbert spaces.

\begin{cor}\label{circle}
Let $X$ be a product system over $\N^2$ of one-dimensional Hilbert spaces, with flipped unitary $\omega$. Then $\omega$ may be identified with an element of the unit circle $\T$, and the isomorphism classes of such product systems are parameterized by $\T$.

Moreover, $X$ is isomorphic to a 2-graph correspondence if and only if $\omega=1$.
\end{cor}

\begin{proof}
Let $(H,K,S)$ be the \Hs\ associated to $X$. Since $H$ and $K$ are one-dimensional, so is $H\otimes K$, so the unitary operator $\omega$ is a scalar multiple of the identity map on a one-dimensional Hilbert space, and this scalar must be an element of $\T$. In fact, $\omega$ can be identified with this element of $\T$, and since any unitary operator on $H\otimes K$ can occur as a flipped unitary of a suitable $X$, any element of $\T$ can arise in this way. It is obvious that  tensor equivalence on the set of unitary operators on the one-dimensional Hilbert space $H\otimes K$ is just equality. For the last statement just observe that an elementary tensor of permutation unitaries on the one-dimensional Hilbert spaces $H$ and $K$ is associated to the element 1 of $\T$.
\end{proof}

\begin{rem}
Thus, even in the trivial case of one-dimensional correspondences over $\C$, there are uncountably many product systems over $\N^2$ that are not isomorphic to 2-graph correspondences.

It is interesting to identify the $C^*$-algebra $\OO_X$ of a product system over $\N^2$ of one-dimensional Hilbert spaces $H$ and $K$: let $\omega$ be the flipped unitary, which we identify with an element of $\T$ as in \corref{circle}. Choosing unit vectors $e$ and $f$ in $H$ and $K$, respectively, it is easy to see using induction that Cuntz-Pimsner covariant representations of the product system $X$ correspond to an assignment $e\mapsto u$ and $f\mapsto v$ of unitaries satisfying the relation $uv=\omega vu$, and so, letting $\omega=e^{i\theta}$, it follows that $\OO_X$ is isomorphic to the rotation algebra with angle $\theta$ \cite[Proposition~5.1.5]{nura}.
\end{rem}

\begin{rem}
It follows from \cite{fowlersims_artin} that the skeleton of a product system $(X,\beta)$ over $\N$
of $c_0(V)$-correspondences is just a single correspondence~$Y$;
then by \cite[Theorem~1.1]{kpq1} we have $Y\cong X_E$ for some directed graph $E$, 
and thus we quickly conclude that $(X,\beta)$ is isomorphic to a $1$-graph correspondence.
More generally, every
product system $(X,\beta)$ over a finitely-generated free semigroup $S$
of $c_0(V)$-correspondences 
is isomorphic to one associated with a product system over $S$ of graphs.
To see this, note that, in the terminology of Fowler-Sims \cite{fowlersims_artin}, 
$S$ is a right-angled Artin semigroup with no commutation relations, 
so the skeleton $Y$ of $(X,\beta)$ has no $T_{i,j}$'s; 
again by \cite{kpq1} each $Y_i$ is isomorphic to a graph correspondence $X_{E_i}$, 
and the $E_i$'s form the skeleton of a product system over $S$ of graphs, 
whose associated product system of correspondences can be checked to be isomorphic to $(X,\beta)$ 
by examining the skeletons.
Thus when the semigroup $S$ is free and finitely-generated 
there is no obstruction to a product system over $S$ of $c_0(V)$-correspondences 
coming from a product system over $S$ of graphs.
\end{rem}

\section{Topological graphs}
\label{sec-topgr}

Now let $V$ be a fixed locally compact Hausdorff space, and let $A=C_0(V)$.
For any topological graph $E=(E^1,V,r,s)$ with vertex space $V$,
let $X(E)$ denote the associated $A$-correspondence.
In this section our characterization problem becomes the following.

\begin{q}
Which $A$-correspondences are isomorphic to $X(E)$ for some topological graph with vertex space $V$?
\end{q}

This question can be fruitfully separated into two parts:

\begin{qs}\label{questions}
(1)
Which Hilbert $A$-modules are isomorphic to $X(E^1,s)$ for some local homeomorphism $s:E^1\to V$?
Here we use self-explanatory notation: if $E$ is a topological graph, then forgetting the range map $r:E^1\to V$, and forgetting the left $A$-module structure, the $A$-correspondence $X(E)$ becomes a Hilbert $A$-module $X(E^1,s)$ that only depends upon the source map $s$, and can be formed from any locally compact Hausdorff space $E^1$ and any local homeomorphism $s:E^1\to V$.

(2)
For Hilbert $A$-modules of the form $X=X(E^1,s)$, which left $A$-module structures make $X$ into a topological graph correspondence $X(E)$ for a suitable range map $r$?
\end{qs}

We need to use the correspondence between Hilbert $A$-modules and continuous Hilbert bundles (for the case of compact $V$, see, e.g., \cite{takahashi1, takahashi2}, also the references \cite{DixmierDouady, dg} are useful), which we briefly recall here in simplified form. 
First, if $p:\HH\to V$ is a continuous Hilbert bundle, then the space $\Gamma_0(\HH)$ of $C_0$-sections becomes a Hilbert $A$-module with operations
\begin{align*}
\<\xi,\eta\>_A(v)&=\<\xi(v),\eta(v)\>\\
(\xi\cdot f)(v)&=\xi(v)f(v)
\end{align*}
for $\xi,\eta\in\Gamma_0(\HH)$ and $f\in A$.
In the other direction,
let $X$ be a Hilbert $A$-module. For each $v\in V$, define a positive semidefinite hermitian form $\<\cdot,\cdot\>_v$ on $X$ by
\[
\<\xi,\eta\>_v=\<\xi,\eta\>_A(v).
\]
Then the Hausdorff completion of $X$ with respect to $\<\cdot,\cdot\>_v$ is a Hilbert space $H_v$.
Let $Q_v:X\to H_v$ be the associated map.
Let $\HH=\bigsqcup_{v\in V}H_v$, and define
$\Phi:X\to \prod_{v\in V}H_v$
by
\[
(\Phi\xi)(v)=Q_v\xi.
\]
Then there is a unique topology on $\HH$ making it into a continuous Hilbert bundle over $V$
such that $\Phi$ is an isometric isomorphism of $X$ onto $\Gamma_0(\HH)$,
and then in fact $\Phi$ is a Hilbert $A$-module isomorphism.
Letting $J_v=\{f\in A:f(v)=0\}$ be the associated maximal ideal, we can equivalently identify
\[
H_v=X/(X\cdot J_v),
\]
which, by the general theory of Hilbert modules, is naturally a Hilbert $A/J_v$-module, and then $Q_v:X\to H_v$ is just the quotient map.

Now consider the case $X=X(E^1,s)$.
Here we can take
\[
H_v=\ell^2(s\inv(v)),
\]
and then $Q_v:X\to H_v$ is given by $Q_v\xi=\xi|_{s\inv(v)}$,
and
$\Phi$ is given by
\[
(\Phi\xi)(v)=\sum_{s(e)=v}\xi(e)\Chi_e,
\]
where we write $\Chi_e$ for the characteristic function of the singleton~$\{e\}$.

We will give one answer to \qref{questions} (1) using the following concept:

\begin{defn}\label{ONB}
Let $p:\HH\to V$ be a continuous Hilbert bundle. We say that a subset $\EE\subset\HH$ is a \emph{continuous choice of orthonormal bases} if
\begin{enumerate}
\item for each $v\in V$, the intersection $\EE_v:=\EE\cap H_v$ is an orthonormal basis for the Hilbert space $H_v$;

\item the restriction of the bundle projection $p$ to $\EE$ is a local homeomorphism.
\end{enumerate}
\end{defn}

\begin{thm}\label{hilbert thm}
A Hilbert $A$-module $X$ is isomorphic to $X(E^1,s)$ for some local homeomorphism $s:E^1\to V$ if and only if the associated Hilbert bundle $p:\HH\to V$ has a continuous choice of orthonormal bases.
\end{thm}

\begin{proof}
First assume that $X=X(E^1,s)$ for a local homeomorphism $s:E^1\to V$.
Define $\gamma:E^1\to \HH$ by
\[
\gamma(e)=\Chi_e,
\]
and let $\EE=\gamma(E^1)$.

\textbf{Claim:}
$\gamma$ is a homeomorphism of $E^1$ onto $\EE$.
To see this, note first of all that
$\gamma$ is certainly injective. To see that $\gamma$ is continuous, let $e_0\in E^1$, and choose a neighborhood $U$ of $e_0$ on which $s$ restricts to a homeomorphism. Then choose $\xi\in C_c(E^1)$ such that $\supp\xi\subset U$ and $\xi$ is identically $1$ on a smaller neighborhood $W$ of $e$. Then
\[
(\Phi\xi)(s(e))=\xi(e)\Chi_e\midtext{for all}e\in U,
\]
so for all $e\in W$ we have
\[
\gamma(e)=\Chi_e=(\Phi\xi)(s(e)),
\]
which is continuous in $e$ because $\Phi\xi$ is a continuous section of $\HH$.

To see that $\gamma$ is open onto its image, let $N$ be an open neighborhood of $e_0$ in $E^1$, and if necessary shrink $N$ so that $s$ maps $N$ homeomorphically onto an open set in $V$.
Put $W=\gamma(N)$.
Then on $W$ the inverse of $\gamma$ is given by
\[
\gamma\inv=\tau\circ p,
\]
where $\tau:s(N)\to N$ is the inverse of the homeomorphism
\[
s|_N:N\to s(N)
\]
and $p:\HH\to V$ is the bundle projection.
Since both $p$ and $\tau$ are continuous, we see that
\[
W=(\gamma\inv)\inv(N)=p\inv(\tau\inv(N))\cap \gamma(E^1)
\]
is open, and the Claim is verified.

For each $v\in V$, clearly
\[
\EE_v:=\EE\cap H_v=\{\Chi_e\}_{e\in s\inv(v)}
\]
is an orthonormal bases of the fibre $H_v=\ell^1(s\inv(v))$ of $\HH$.
The commutative diagram
\[
\xymatrix{
E^1 \ar[r]^\gamma \ar[dr]_s
&\EE \ar[d]^{p|_\EE}
\\
&V
}
\]
shows that $p|_\EE$ is a local homeomorphism because $\gamma$ is a homeomorphism.

Conversely, suppose that we have a continuous choice $\EE$ of orthonormal bases for $\HH$.
Let $s=p|_\EE$, which is a local homeomorphism by hypothesis.
We will construct an isomorphism $\Psi:X(\EE,s)\to X$ of Hilbert $A$-modules.
For $\xi\in X(\EE,s)$ define a section $\Psi\xi$ of the Hilbert bundle $\HH$ by
\[
(\Psi\xi)(v)=\sum_{e\in\EE_v}\xi(e)e,
\]
where as above $\EE_v=\EE\cap H_v$ is an orthonormal basis of the fibre $H_v$. Note that $\EE_v=s\inv(v)$.
We have
\begin{align*}
\|(\Psi\xi)(v)\|^2
&=\sum_{e\in s\inv(v)}|\xi(e)|^2
=\<\xi,\xi\>_{C_0(V)}(v),
\end{align*}
so $\Psi\xi\in \Gamma_0(\HH)$.
Clearly $\Psi:X(\EE,s)\to \Gamma_0(\HH)$ is linear. We show that it preserves inner products: for $\xi,\eta\in X(\EE,s)$ and $v\in V$ we have
\begin{align*}
\<\Psi\xi,\Psi\eta\>_{C_0(V)}(v)
&=\bigl\<(\Psi\xi)(v),(\Psi\eta)(v)\bigr\>
\\&=\sum_{e\in s\inv(v)}\bigl\<\xi(e)e,\eta(e)e\bigr\>
\\&=\sum_{e\in s\inv(v)}\bar{\xi(e)}\eta(e)
\\&=\<\xi,\eta\>_{C_0(V)}(v).
\end{align*}
It follows that $\Psi$ preserves the right $A$-module structures.
It now remains to show that $\Psi$ is surjective,
and for this it suffices to show that the set $\Psi(X(\EE,s))$ of sections is a $C_0(V)$-submodule that is total in each fibre: the first holds since $\Psi$ preserves the right $A$-module structures, and for the second we see that for each $v\in V$ we have
\[
\{(\Psi\xi)(v):\xi\in X(\EE,s)\}\midtext{is total in}H_v,
\]
because it contains $\EE_v$, since we can take $\xi\in X(\EE,s)$ that is $1$ at any given $e\in \EE_v$ and is $0$ at all other $e'\in s\inv(v)$.
\end{proof}

Now we incorporate the left $A$-module structure.
Given an $A$-correspondence $X$, we know from the above that, at least as far as the Hilbert $A$-module structure goes, we can assume that $X=\Gamma_0(\HH)$ for a continuous Hilbert bundle $p:\HH\to V$, and then for each $v\in V$, since $\phi(A)$ acts on $X$ by right $A$-module maps, there is a representation $\pi_v$ of $A$ on the fibre $H_v$ such that
\[
\bigl(\phi(f)\xi\bigr)(v)=\pi_v(f)\xi(v)\midtext{for}\xi\in X.
\]

\begin{cor}\label{corres cor}
Let $p:\HH\to V$ be a continuous Hilbert bundle, let $X=\Gamma_0(\HH)$ be the associated Hilbert $A$-module, and suppose that $\phi:A\to \LL(X)$ is a nondegenerate homomorphism.
Then the $A$-correspondence $X$ is isomorphic to $X(E)$ for some topological graph $E$ if and only if $\HH$ has a continuous choice $\EE$ of orthonormal bases such that for each $v\in V$, $\EE_v=\EE\cap H_v$ diagonalizes the representation $\pi_v$ of $A$ on $H_v$ associated to the left module action $\phi:A\to \LL(X)$.
\end{cor}

\begin{proof}
First assume that $X=X(E)$ for a topological graph $E=(E^1,V,r,s)$. Then as in \thmref{hilbert thm} we can take $\EE=\gamma(E^1)$ as our continuous choice of orthonormal bases. For each $f\in A$, $v\in V$, and $e\in \EE_v$ we have
\[
\pi_v(f)\gamma(e)=\pi_v(f)\Chi_e=f(r(e))\Chi_e,
\]
so $\gamma(e)$ is an eigenvector of $\pi_v(f)$. Thus the orthonormal basis $\EE_v$ of $H_v$ diagonalizes the representation $\pi_v$.

Conversely, suppose that we have a continuous choice $\EE$ of orthonormal bases for $\HH$
such that $\EE_v$ diagonalizes $\pi_v$ for every $v\in V$.
By \thmref{hilbert thm} we may assume that $X=X(E^1,s)$ for a local homeomorphism $s:E^1\to V$, and that $\EE=\{\Chi_e:e\in E^1\}$.
Our hypothesis then says that for each $e\in E^1$, $\Chi_e$ is a common eigenvector for the operators $\{\pi_v(f):f\in A\}$,
and hence by restriction we have a $1$-dimensional representation of $A$ on the subspace $\C\Chi_e$; therefore there is a unique $r(e)\in V$ such that
\[
\pi_v(f)\Chi_e=f(r(e))\Chi_e\midtext{for}f\in A.
\]
We must show two things:
\begin{enumerate}
\item $r:E^1\to V$ is continuous, so that $E:=(E^1,V,r,s)$ is a topological graph, and

\item for $f\in A$, $\xi\in X$, and $e\in E^1$ we have
\[
\bigl(\phi(f)\xi\bigr)(e)=f(r(e))\xi(e).
\]
\end{enumerate}

For (1), let $e_0\in E^1$, and choose $\xi\in C_c(E^1)\subset X$ that is identically $1$ on a neighborhood $U$ of $e_0$. It suffices to show that for all $f\in A$ the function $e\mapsto f(r(e))$ is continuous at $e_0$. But for $e\in U$ we have
\[
f(r(e))=f(r(e))\xi(e)=\bigl(\phi(f)\xi\bigr)(e),
\]
and the function $\phi(f)\xi$ is continuous.

For (2), let $v=s(e)$, and note that
\begin{align*}
\Bigl.\bigl(\phi(f)\xi\bigr)\Bigr|_{s\inv(v)}
&=\pi_v(f)\xi|_{s\inv(v)}
\\&=\pi_v(f)\sum_{e\in s\inv(v)}\xi(e)\Chi_e
\\&=\sum_{e\in s\inv(v)}\xi(e)\pi_v(f)\Chi_e
\\&=\sum_{e\in s\inv(v)}\xi(e)f(r(e))\Chi_e,
\end{align*}
and that
\[
\Bigl.\bigl(\phi(f)\xi\bigr)\Bigr|_{s\inv(v)}
=\pi_v(f)\xi|_{s\inv(v)}
\]
so that
\[
\bigl(\phi(f)\xi\bigr)(e)=\sum_{e'\in s\inv(v)}\xi(e')f(r(e'))\Chi_e'(e)=f(r(e))\xi(e).
\qedhere
\]
\end{proof}

In order to introduce our obstruction to an $A$-correspondence coming from a topological graph, we specialize to the case of 1-dimensional fibres. As above, we separate into two steps, starting with Hilbert modules.

\begin{cor}\label{hilbert 1-dim}
Let $X$ be a Hilbert $A$-module whose associated Hilbert bundle $p:\HH\to V$ has 1-dimensional fibres. Then $X$ is isomorphic to $X(E^1,s)$ for some local homeomorphism $s:E^1\to V$ if and only if $\HH$ is trivial.
\end{cor}

\begin{proof}
First assume that $\HH$ is trivial. Then after applying a Hilbert-bundle isomorphism (equivalently, a Hilbert module isomorphism), we may assume that $\HH$ is a product bundle $H\times V$, with bundle projection equal to the coordinate projection onto the second factor. Let $B$ be an orthonormal basis of the Hilbert space $H$. Then it is routine to verify that $\EE:=B\times V$ is a continuous choice of orthonormal bases for the product bundle. Thus $X\cong X(E^1,s)$ for some local homeomorphism $s:E^1\to V$.

For the converse, it suffices to show that if $X=X(E^1,s)$ for some local homeomorphism, and if the associated Hilbert bundle $p:\HH\to V$ has 1-dimensional fibres, then $\HH$ is trivial. Define $\gamma:E^1\to \HH$ as before, and let $\EE=\gamma(E^1)$ be the corresponding continuous choice of orthonormal bases. For each $v\in V$ the intersection $\EE_v=\EE\cap H_v$ is an orthonormal basis for the fibre $H_v=\ell^2(s\inv(v))$. Since $H_v$ is 1-dimensional, the cardinality of $s\inv(v)$ is 1. Thus the local homeomorphism $s:E^1\to V$ is in fact bijective, and hence is a homeomorphism of $E^1$ onto $V$. Thus, replacing the local homeomorphism $s:E^1\to V$ by the isomorphic (in an obvious sense) one $\id:V\to V$, the Hilbert bundle $\HH$ is isomorphic to $\C\times V$, and is therefore trivial.
\end{proof}

Note that the first part of the above proof did not depend upon the 1-dimensionality hypothesis.

We can immediately parlay \corref{hilbert 1-dim} to a result involving correspondences:

\begin{cor}\label{correspondence 1-dim}
Let $X$ be an $A$-correspondence whose associated Hilbert bundle $p:\HH\to V$ has 1-dimensional fibres. Then $X$ is isomorphic to $X(E)$ for some topological graph $E$ if and only if $\HH$ is trivial.
\end{cor}

\begin{proof}
First assume that $\HH$ is trivial. As in the above proof, we can find a continuous choice $\EE$ of orthonormal bases for $\HH$. We only need to show that for each $v\in V$ the associated representation $\pi_v:A\to B(H_v)$ is diagonalized by the orthonormal basis $\EE_v=\EE\cap H_v$. But this is obvious since $H_v$ is 1-dimensional.

The converse follows immediately from \corref{hilbert 1-dim}.
\end{proof}

We will show below in \corref{picard} that
\corref{correspondence 1-dim} applies in particular to imprimitivity bimodules over $A$, and then we can make a connection with the Picard group of $A$.

We use the convention that $A$-imprimitivity bimodules are complete\footnote{unlike in some earlier literature on imprimitivity bimodules between $C^*$-algebras, for example \cite{rae:picard, bgr}}, so that such an imprimitivity bimodule $X$ is an $A$-correspondence such that the homomorphism $\phi:A \to \LL(X)$ associated to the left-module action is in fact an isomorphism onto $\KK(X)$.
By \cite[Corollary~3.33]{tfb}, every $A$-imprimitivity bimodule $X$ induces a homeomorphism of $V$, called the \emph{Rieffel homeomorphism}.

The \emph{Picard group of $A$}, denoted $\pic A$, is the group of isomorphism classes of $A$-imprimitivity bimodules with group operation balanced tensor product over $A$.
The identity element of $\pic A$ is the class of the \emph{trivial} $A$-imprimitivity bimodule, namely $A$ itself with operations
\[
f\cdot g=fg,
\quad
\<f,g\>_{A}=\bar fg,
\midtext{and}
{}_{A}\<f,g\>=f\bar g.
\]
We call the element of $\pic A$ determined by an $A$-imprimitivity bimodule $X$ the \emph{Picard invariant} of $X$.
By \cite[Proposition~3.1]{bgr} (slightly modified, to turn an antihomomorphism into a homomorphism), the group $\aut A$ of automorphisms embeds as a subgroup of $\pic A$.
More precisely, our convention differs from that of \cite{bgr}: if $\alpha\in\aut A$, we define the associated imprimitivity bimodule of $A$ to be $X=A$ with operations defined for $\xi,\eta\in X$ and $a\in A$ by
\begin{itemize}
\item $\xi\cdot a=\xi a$;
\item $\<\xi,\eta\>_A=\xi^*\eta$;
\item $a\cdot\xi=\alpha(a)\xi$;
\item ${}_A\<\xi,\eta\>=\alpha\inv(\xi\eta^*)$.
\end{itemize}

\begin{cor}\label{picard}
Let $X$ be an $A$-imprimitivity bimodule. Then $X$ is isomorphic to $X(E)$ for some topological graph $E$ if and only if the Picard invariant of $X$ is an automorphism.
\end{cor}

\begin{proof}
First we show that, because the $A$-correspondence $X$ is in fact an imprimitivity bimodule, it follows that the associated Hilbert bundle $p:\HH\to V$ has 1-dimensional fibres. For $v\in V$, recall our notation $J_v=\{f\in A:f(v)=0\}$. Let $I_v=\clspn\{{}_A\<X,X\cdot J_v\>\}$ be the ideal of $A$ that is induced by the imprimitivity bimodule $X$. Then by, e.g., \cite[Lemma~5.12]{tfb}, the quotient $H_v=X/(X\cdot J_v)$ is an $(A/I_v)-(A/J_v)$ imprimitivity bimodule. Since the ideal $J_v$ is maximal, so is $I_v$, by the properties of the Rieffel correspondence, and so $H_v$ is a $\C$-imprimitivity bimodule, and hence, again by the properties of the Rieffel correspondence, must be 1-dimensional.

Now assume that $X=X(E)$ for a topological graph $E$.
Thus by \corref{hilbert 1-dim} we can assume that the associated Hilbert bundle $p:\HH\to V$ is trivial, and consequently that as a Hilbert $A$-module $X$ is isomorphic to $A$ with operations coming from the multiplication and involution of $A$. But every $A$-imprimitivity bimodule in which the Hilbert module structure is given by $A$ with the standard operations has left module structure given by an automorphism, because in any event the left module structure is given by a homomorphism $\varphi:A\to \LL(A)=M(A)$ that is, by the properties of imprimitivity bimodules, actually an isomorphism onto $\KK(A)=A$, namely an automorphism of~$A$.

Conversely, assume that the Picard invariant of $X$ is an automorphism.
It follows from the constructions 
that the associated Hilbert bundle $\HH$ is trivial. The result now follows from \corref{correspondence 1-dim}.
\end{proof}

\begin{rem}
Let $X$ be an $A$-imprimitivity bimodule, and let $p:\HH\to V$ be the associated continuous Hilbert bundle. 
Assume that $X$ is \emph{symmetric} (borrowing terminology from \cite{AEhilbert}) in the sense that $a\cdot \xi=\xi\cdot a$ for all $a\in A$ and $\xi\in X$.
Since $\HH$ has one-dimensional fibres, $\HH$ is a complex line bundle (i.e., it is locally trivial, by \cite[Section~10, Remarque]{DixmierDouady}).
Vasselli shows in \cite[Proposition~4.3]{VasselliExtension} that, at least when $V$ is compact, the $C^*$-algebra $\OO_X$ is commutative, with spectrum the sphere bundle of $\HH$.
\end{rem}

\providecommand{\bysame}{\leavevmode\hbox to3em{\hrulefill}\thinspace}
\providecommand{\MR}{\relax\ifhmode\unskip\space\fi MR }
\providecommand{\MRhref}[2]{%
  \href{http://www.ams.org/mathscinet-getitem?mr=#1}{#2}
}
\providecommand{\href}[2]{#2}

\end{document}